\newtheorem{dfn} [subsection]{Definition}
\newtheorem{obs} [subsection]{Remark}
\newtheorem{prop}[subsection]{Proposition}
\newtheorem{teor}[subsection]{Theorem}
\newtheorem{lema}[subsection]{Lemma}
\def\Cons{\operatorname{Cons}}
\def\SS{\operatorname{S}}
\def\CC{\operatorname{C}}
\def\SL{\operatorname{SL}}
\def\Ar{\operatorname{Ar}}
\def\Ch{\operatorname{Ch}}
\def\Gal{\operatorname{Gal}}
\def\Hol{\operatorname{Hol}}
\def\Lin{\operatorname{Lin}}
\def\Irr{\operatorname{Irr}}
\def\Ker{\operatorname{Ker}}
\numberwithin{equation}{section}
\begin{document}
\selectlanguage{english}
\frenchspacing

\title{A note on the monomial characters of a wreath product of groups}
\author{Mircea Cimpoea\c s$^1$}
\date{}

\maketitle

\begin{abstract}
Given a quasi-monomial, respectively an almost monomial, group $A$ and a cyclic group $C$ of prime order $p>0$, we 
show that the wreath product $W=A\wr C$ is quasi-monomial (respectively almost monomial), if certain technical
conditions hold.

\noindent \textbf{Keywords:} quasi-monomial group; almost monomial group; Artin L-function.

\noindent \textbf{2020 Mathematics Subject
Classification:} 20C15; 11R42;
\end{abstract}

\footnotetext[1]{ \emph{Mircea Cimpoea\c s}, University Politehnica of Bucharest, Faculty of
Applied Sciences, %Department of Mathematical Methods and Models, 
Bucharest, 060042, Romania and Simion Stoilow Institute of Mathematics, Research unit 5, P.O.Box 1-764,
Bucharest 014700, Romania, E-mail: mircea.cimpoeas@upb.ro,\;mircea.cimpoeas@imar.ro}
%\tableofcontents

\section*{Introduction}

We recall that a finite group $G$ is monomial, if any irreducible character $\chi$ of $G$ is induced by a linear character 
$\lambda$ of a subgroup $H$ of $G$. More generally, a finite group $G$ is quasi-monomial, if for any irreducible character $\chi$ of
$G$, there exist a subgroup $H$ of $G$ and a linear character $\lambda$ of $H$ such that $\lambda^G=d\chi$ for a positive integer $d$.

The notion of \emph{almost monomial groups}, which is a loose generalization of (quasi)-monomial groups,
was introduced by F.\ Nicolae in a recent paper \cite{monat}, in connection with the study of Artin L-functions
associated to a Galois extension $K/\mathbb Q$. A finite group $G$ is called \emph{almost monomial}, if for any two 
irreducible characters $\chi\neq \phi$ of $G$,
there exist a subgroup $H\leqslant G$ and a linear character $\lambda$ of $H$, such that $\chi$ is a constituent
of the induced character $\lambda^G$ and $\phi$ is not. In the first section, we present the main definitions and a motivation
for our study.

A key ingredient in the proof of Dade's theorem (\cite[Theorem 9.7]{isaac2}), i.e. any solvable group is isomorphic
to subgroup of some monomial group, is the following result: If $A$ is a monomial group and $C$ is
cyclic of prime order $p > 0$, then the wreath product $W = A \wr C$ is monomial. The main purpose of this note
is to generalize this result this in the framework of quasi-monomial and almost monomial groups: In Proposition \ref{bobo}
we prove that if $A$ is quasi-monomial and $C$ is cyclic of order $p>0$, then $W=A\wr C$ is quasi-monomial if a special
condition holds. In Theorem \ref{main}, we prove that if $A$ is quasi-monomial and $C$ is cyclic of order $p>0$, the $W=A\wr C$ 
is almost-monomial if a technical condition holds.

\newpage
\section{Preliminaries}

Let $G$ be a finite group. We denote $\Ch(G)$ the set of characters associated to the linear 
representations of the group $G$ over the complex field. We denote $\Irr(G)=\{\chi_1,\ldots,\chi_r\}$ 
the set of irreducible characters of $G$. It is well known that any character $\chi\in \Ch(G)$ can be uniquely written 
as linear combination $\chi=a_1\chi_1+a_2\chi_2+\cdots+a_r\chi_r$ where $a_i$'s are nonnegative integers and
not all of them are zero. The set of constituents of $\chi$ is $\Cons(\chi)=\{\chi_i\;:\;\langle \chi,\chi_i \rangle = a_i>0\}$,
where $\langle \chi,\phi \rangle = \sum_{g\in G} \chi(g)\overline{\phi(g)}$.

A character $\lambda$ of $G$ is called linear, if $\lambda(1)=1$. Obviously, the linear
characters are irreducible. We denote $\Lin(G)$, the set of linear characters of $G$. Note that a group $G$ is commutative if and
only if $\Lin(G)=\Irr(G)$.

If $H\leqslant G$ is a subgroup and $\chi$ is a character of $G$, then the restriction of $\chi$ to $H$, denoted by $\chi_H$,
is a character of $H$. If $\theta$ is a character of $H$, then 
$$\theta^G(g):= \frac{1}{|H|}\sum_{x\in G} \theta^0(xgx^{-1}),\;\text{ for all }g\in G,$$
where $\theta^0(x)=\theta(x)$, for all $x\in H$, and $\theta^0(x)=0$, for all $x\in G\setminus H$, is a character of $G$, which is
called the character induced by $\theta$ on $G$.

\begin{dfn}
Let $G$ be a finite group. Let $\chi$ a character of $G$.
\begin{enumerate}
\item[(1)] $\chi$ is called \emph{monomial} if there exists a subgroup $H\leqslant G$ and a linear character $\lambda$ of $H$ such that $\lambda^G=\chi$.
\item[(2)] $\chi$ is called \emph{quasi-monomial} if there exists a subgroup $H\leqslant G$ and a linear character $\lambda$ of $H$ such that $\lambda^G=d\cdot \chi$,
      for some positive integer $d$.
\end{enumerate}
\end{dfn}

We mention that, according to a theorem of Taketa, see \cite{taketa} or \cite[Theorem A]{dorn}, all monomial groups are solvable.
In general, the converse is not true, the smallest example being the group $\SL_2(\mathbb F_3)$; see \cite[p. 67]{isaac2}.

\begin{dfn}
Let $G$ be a finite group.
\begin{enumerate}
\item[(1)] The group $G$ is called monomial if all the irreducible characters of $G$ are monomial.
\item[(2)] The group $G$ is called quasi-monomial if all the irreducible characters of $G$ are quasi-monomial.
\end{enumerate}
\end{dfn}

It is not known if there are quasi-monomial groups which are not monomial. However, it was proved in \cite{konig} that a special class of 
quasi-monomial groups are solvable.
%It is well known that 
%$\Irr(G)$ is an orthonormal basis in the $\mathbb C$-vector space of class functions. In particular, 
%if $\chi\in\Ch(G)$, then $\chi = \langle \chi_1,\chi \rangle \chi_1 + \cdots + \langle \chi_r,\chi \rangle \chi_r$. Hence, the 
%\emph{set of constituents} of $\chi$ is
%$\supp(\chi):=\{\phi \in \Irr(G)\;:\; \langle \phi,\chi \rangle > 0\}$.
F.\ Nicolae \cite{monat} introduced the following generalization of (quasi)-monomial groups:

\begin{dfn}
A finite group $G$ is called \emph{almost monomial}, if for any two irreducible characters $\chi\neq \phi$ of $G$,
there exist a subgroup $H\leqslant G$ and a linear character $\lambda$ of $H$, such that 
$\chi\in \Cons(\lambda^G)$ and $\phi\notin\Cons(\lambda^G)$.
\end{dfn}

Obviously, any (quasi)-monomial group $G$ is also almost monomial; the converse however is false. For instance, the already mentioned group $\SL_2(\mathbb F_3)$
is almost monomial, but is not (quasi)-monomial. We recall the main results from \cite{lucrare}, regarding the almost monomial groups:

\begin{teor}\label{22} We have that:
\begin{enumerate}
\item[(1)] The symmetric group $\SS_n$ is almost monomial for any $n\geq 1$.(\cite[Theorem 2.1]{lucrare})
\item[(2)] If $G$ is almost monomial and $N\unlhd G$ is a normal subgroup, then $G/N$ is almost monomial.(\cite[Theorem 2.2]{lucrare})
\item[(3)] $G_1$ and $G_2$ are almost monomial if and only if $G_1\times G_2$ is almost monomial. (\cite[Theorem 2.3]{lucrare})
\end{enumerate}
\end{teor}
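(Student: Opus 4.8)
The statement collects three facts about almost monomiality, so the plan is to treat the three items in turn; in each case I must produce, for an ordered pair of distinct irreducible characters $(\chi,\phi)$, a subgroup $H$ and a linear character $\lambda$ of $H$ whose induction $\lambda^G$ contains $\chi$ but not $\phi$, the reverse order being handled by swapping the roles.

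For (1) the plan is to exploit the combinatorics of the symmetric group. I would index $\Irr(\SS_n)$ by partitions $\lambda\vdash n$, writing $\chi^\lambda$, and recall Young's rule: inducing the trivial character of the Young subgroup $\SS_\mu$ gives the permutation character $\mathbf{1}_{\SS_\mu}^{\SS_n}=\sum_\lambda K_{\lambda\mu}\chi^\lambda$, whose constituents are exactly the $\chi^\lambda$ with $\lambda\trianglerighteq\mu$ in the dominance order, with $K_{\mu\mu}=1$. Given $\chi^\alpha\neq\chi^\beta$, if $\beta\not\trianglerighteq\alpha$ then $\mathbf{1}_{\SS_\alpha}^{\SS_n}$ already separates them ($\chi^\alpha$ in, $\chi^\beta$ out). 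The remaining case $\beta\triangleright\alpha$ is handled by passing to sign characters: tensoring Young's rule with the sign character shows that $\mathrm{sgn}_{\SS_{\alpha'}}^{\SS_n}=\sum_\lambda K_{\lambda\alpha'}\chi^{\lambda'}$ has constituents exactly the $\chi^\nu$ with $\nu\trianglelefteq\alpha$ (using that conjugation of partitions reverses dominance), so it contains $\chi^\alpha$ but not $\chi^\beta$. The substantive input here is the positivity pattern of the Kostka numbers.

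For (2), let $\bar\chi\neq\bar\phi$ be irreducible characters of $G/N$ and inflate them to irreducible characters $\chi\neq\phi$ of $G$ with $N\leqslant\Ker\chi\cap\Ker\phi$. Almost monomiality of $G$ yields $H\leqslant G$ and a linear character $\lambda$ of $H$ with $\chi\in\Cons(\lambda^G)$ and $\phi\notin\Cons(\lambda^G)$. The key observation is that $\lambda$ is automatically trivial on $H\cap N$: by Frobenius reciprocity $\lambda$ is a constituent of $\chi_H$, and since $\chi$ is trivial on $N$ the restriction $\chi_H$ is trivial on $H\cap N$, which (using that decompositions of characters have nonnegative coefficients) forces every constituent, in particular $\lambda$, to be trivial there. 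Hence $\lambda$ descends to a linear character $\bar\lambda$ of $HN/N\leqslant G/N$, and the compatibility of inflation with induction together with the fact that inflation is an isometry onto the class functions trivial on $N$ gives $\langle\bar\lambda^{G/N},\bar\chi\rangle=\langle\lambda^G,\chi\rangle>0$ and $\langle\bar\lambda^{G/N},\bar\phi\rangle=\langle\lambda^G,\phi\rangle=0$. This descent is the delicate point of the whole statement.

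For (3), the backward implication is immediate from (2), since $G_1\cong(G_1\times G_2)/(\{1\}\times G_2)$ and $G_2\cong(G_1\times G_2)/(G_1\times\{1\})$ are quotients. For the forward implication I would use that $\Irr(G_1\times G_2)=\{\chi\times\psi\}$ and that external induction factors, $(\lambda_1\times\lambda_2)^{G_1\times G_2}=\lambda_1^{G_1}\times\lambda_2^{G_2}$, so that $\Cons((\lambda_1\times\lambda_2)^{G_1\times G_2})=\{\eta\times\xi:\eta\in\Cons(\lambda_1^{G_1}),\ \xi\in\Cons(\lambda_2^{G_2})\}$. Given distinct $\chi_1\times\psi_1$ and $\chi_2\times\psi_2$, at least one coordinate differs, say $\chi_1\neq\chi_2$; I choose $(H_1,\lambda_1)$ separating $\chi_1$ from $\chi_2$ in $G_1$, and take $\lambda_2$ the trivial character of the trivial subgroup of $G_2$, whose induction is the regular character and hence contains $\psi_1$. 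Then $\chi_1\times\psi_1$ is a constituent while $\chi_2\times\psi_2$ is not, since $\chi_2\notin\Cons(\lambda_1^{G_1})$; the case $\psi_1\neq\psi_2$ is symmetric. The main obstacle throughout is the descent in (2); parts (1) and (3) then follow from standard representation-theoretic bookkeeping.
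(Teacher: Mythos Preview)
The paper does not prove this theorem at all: it is quoted verbatim from \cite{lucrare}, with each item carrying only a citation and no argument. There is therefore no ``paper's own proof'' to compare your proposal against.

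That said, your three arguments are all correct. For (1), the Young's-rule argument is the standard one: the dichotomy ``either $\beta\not\trianglerighteq\alpha$, or $\beta\triangleright\alpha$'' is exhaustive for $\alpha\neq\beta$, and in the second case the sign-twisted induction $\mathrm{sgn}_{\SS_{\alpha'}}^{\SS_n}$ has constituent set $\{\chi^\nu:\nu\trianglelefteq\alpha\}$ exactly as you state. For (2), the descent step is justified: the equality $\langle\bar\lambda^{G/N},\bar\chi\rangle=\langle\lambda^G,\chi\rangle$ follows because both sides equal $\langle\lambda,\chi_H\rangle_H$ once $\lambda$ is trivial on $H\cap N$ and $\chi$ is trivial on $N$ (pass through $HN/N\cong H/(H\cap N)$). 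For (3), deducing the backward direction from (2) and the forward direction from $(\lambda_1\times\lambda_2)^{G_1\times G_2}=\lambda_1^{G_1}\times\lambda_2^{G_2}$, with the regular character on the non-distinguishing factor, is exactly right.
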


The main motivation in studying (quasi)-monomial and almost monomial groups is given by their connection with the theory of Artin L-functions.
Let $K/\mathbb Q$ be a finite Galois extension. 
For the character $\chi$ of a representation of the Galois group $G:=\Gal(K/\mathbb Q)$
on a finite dimensional complex vector space, let $L(s,\chi):=L(s,\chi,K/\mathbb Q)$ be the corresponding Artin L-function 
(\cite[P.296]{artin2}). 

Artin conjectured that $L(s,\chi)$ is holomorphic in $\mathbb C\setminus \{1\}$ and $s=1$ is a simple pole. 
% As a consequence of the fact that the group associated to $\eM(G)$ is $\eR(G)$,
If the character $\chi$ is (quasi)-monomial, then $L(s,\lambda^G)$ is holomorphic in $\mathbb C\setminus \{1\}$.
In particular, if a group $G$ is (quasi)-monomial, Artin Conjecture holds.

Using the fact that any character of $G$ can be written as a $\mathbb Z$-linear combination of monomial characters, 
Brauer \cite{brauer} proved that $L(s,\chi)$ is meromorphic in $\mathbb C$, of order $1$.

Let $\chi_1,\ldots,\chi_r$ be the irreducible characters of $G$, $f_1=L(s, \chi_1),\ldots,f_r=L(s,\chi_r)$ the corresponding Artin L-functions.
In \cite{forum} we proved that $f_1,\ldots,f_r$ are algebraically independent over the field of meromorphic functions of order $<1$.
We consider
$$\Ar:=\{f_1^{k_1}\cdot\ldots\cdot f_r^{k_r}\mid k_1\geq 0,\ldots,k_r\geq 0\}$$
the multiplicative semigroup of all  L-functions.
Now, let $s_0\in\mathbb C\setminus\{1\}$. We denote $\Hol(s_0)$ the semigroup of Artin $L$-functions holomorphic at $s_0$.
Obviously, we have the inclusion $$\Hol(s_0)\subset \Ar.$$
The main result of \cite{monat} is the following:

\begin{teor}\label{25}
If $G$ is almost monomial, then the following are equivalent:
\begin{enumerate}
\item[(1)] Artin's conjecture is true: $\Hol(s_0)=\Ar.$
\item[(2)] The semigroup $\Hol(s_0)$ is factorial. 
\end{enumerate}
\end{teor}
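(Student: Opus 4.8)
The plan is to turn the semigroup statement into the combinatorics of a rational cone inside $\mathbb N^r$ and then read off both implications. First I would fix the dictionary coming from \cite{forum}: since $f_1,\dots,f_r$ are algebraically independent over the meromorphic functions of order $<1$, there is no multiplicative relation among them, so $f_1^{k_1}\cdots f_r^{k_r}\mapsto(k_1,\dots,k_r)$ is an isomorphism of semigroups $\Ar\cong\mathbb N^r$. In particular $\Ar$ is a free commutative monoid, hence factorial, with atoms $f_1,\dots,f_r$. Writing $a_i:=\ord_{s_0}f_i\in\mathbb Z$ and using that orders add under multiplication (no cancellation can occur), a product $f_1^{k_1}\cdots f_r^{k_r}$ is holomorphic at $s_0$ exactly when $\sum_i k_i a_i\ge 0$. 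Thus, under the isomorphism above,
$$\Hol(s_0)\;\cong\;\left\{(k_1,\dots,k_r)\in\mathbb N^r \;:\; \sum_{i=1}^r k_i a_i\ge 0\right\},$$
a reduced, cancellative, normal affine submonoid of $\mathbb N^r$. Since $s_0\ne 1$, Artin's conjecture at $s_0$ means precisely that each $L(s,\chi_i)$ is holomorphic there, i.e. $a_i\ge 0$ for all $i$, which (testing on the basis vectors) is visibly equivalent to $\Hol(s_0)=\Ar$. So statement (1) is the assertion $a_1,\dots,a_r\ge 0$, and (1)$\Rightarrow$(2) is immediate: the defining inequality becomes vacuous, $\Hol(s_0)=\Ar\cong\mathbb N^r$, which is factorial.

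For (2)$\Rightarrow$(1) I would argue by contraposition: assuming some $a_{i_0}<0$ (a pole of $L(s,\chi_{i_0})$ at $s_0$), the goal is to exhibit an element of $\Hol(s_0)$ with two essentially different atomic factorizations, contradicting factoriality (a finitely generated, reduced, cancellative, factorial monoid is free, so $\Hol(s_0)$ would have to be $\cong\mathbb N^r$ with exactly $r$ atoms). Almost monomiality enters through two facts. First, as recalled in the introduction, for an induced linear character $\lambda^G$ the function $L(s,\lambda^G)$ is holomorphic on $\mathbb C\setminus\{1\}$, hence $\lambda^G\in\Hol(s_0)$. Second, almost monomiality provides, for the fixed index $i_0$ and each $k\ne i_0$, an induced linear character $\theta_k$ with $\chi_{i_0}\in\Cons(\theta_k)$ and $\chi_k\notin\Cons(\theta_k)$, so that $\bigcap_{k\ne i_0}\Cons(\theta_k)=\{\chi_{i_0}\}$. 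Each $\theta_k$ lies in $\Hol(s_0)$, has positive $i_0$-coordinate and vanishing $k$-coordinate. Because $a_{i_0}<0$ the basis vector $e_{i_0}$ is not in $\Hol(s_0)$, so in any factorization of $\theta_k$ into atoms of $\Hol(s_0)$ the constituent $\chi_{i_0}$ must be carried by a ``mixed'' atom $g$ with $g_{i_0}>0$ and, since $\langle a,g\rangle\ge 0$, with support meeting $\{j:a_j>0\}$. No single such atom can have support $\{i_0\}$, and the separation property forces at least two distinct mixed atoms to appear across the $\theta_k$. The plan is then to combine the $\theta_k$ (and, if needed, the holomorphic relation $\prod_i L(s,\chi_i)^{\chi_i(1)}=\zeta_K(s)$, whose order at $s_0\ne 1$ is nonnegative) so that one holomorphic character is written once through the free atoms $e_i$ with $a_i\ge 0$ and once through these mixed atoms.

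The main obstacle is exactly this last construction: converting the qualitative separation supplied by almost monomiality into a concrete pair of incompatible factorizations, equivalently showing that the presence of a pole prevents the holomorphy cone from being simplicial and unimodular. I expect this to require careful bookkeeping of the supports of the extra atoms relative to the partition $\{i:a_i\ge 0\}\sqcup\{i:a_i<0\}$, using the $\theta_k$ to guarantee both that no atom absorbs $\chi_{i_0}$ alone and that genuinely distinct mixed atoms are needed; the non-unique factorization, and hence the failure of factoriality, should follow. Everything else—the reduction to $\mathbb N^r$, the holomorphy of the induced-linear $L$-functions, and the easy implication (1)$\Rightarrow$(2)—is routine.
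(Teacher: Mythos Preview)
The paper does not contain a proof of this theorem: it is quoted as ``The main result of \cite{monat}'' and is stated without proof, serving only as motivation for the study of almost monomial groups. There is therefore no proof in the present paper to compare your proposal against; any comparison would have to be with Nicolae's original argument in \cite{monat}.

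On its own merits, your proposal has the right setup for (1)$\Rightarrow$(2) and the identification $\Ar\cong\mathbb N^r$ via algebraic independence is correct. However, the direction (2)$\Rightarrow$(1) is not actually proved: you isolate the right objects (the induced monomial characters $\theta_k$ with $\chi_{i_0}\in\Cons(\theta_k)$ and $\chi_k\notin\Cons(\theta_k)$, all lying in $\Hol(s_0)$), but the decisive step---producing from these a single element of $\Hol(s_0)$ with two inequivalent atomic factorizations---is left at the level of ``I expect this to require careful bookkeeping'' and ``should follow''. In particular, you have not shown why the separation condition $\bigcap_{k\neq i_0}\Cons(\theta_k)=\{\chi_{i_0}\}$ forces at least two \emph{distinct} mixed atoms to occur, nor how to assemble them into a concrete violation of unique factorization. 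Until that construction is carried out, the argument for (2)$\Rightarrow$(1) remains a plan rather than a proof.
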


The main result of \cite{lucrare} is the following:

\begin{teor}\label{26}
If $G$ is almost monomial and $s_0$ is not a common zero for any two distinct L-functions $f_k$ and $f_l$ then all Artin L-functions of $K/\mathbb Q$ are holomorphic at $s_0$.  
\end{teor}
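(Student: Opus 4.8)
The plan is to reduce everything to the orders of the L-functions at $s_0$ and to convert the holomorphy of the \emph{monomial} L-functions into a system of linear inequalities which, combined with the almost monomial property and the no-common-zero hypothesis, forces the order of every $f_i$ to be nonnegative. First I would set $n_i:=\ord_{s=s_0}f_i\in\mathbb Z$; this is well defined because, by Brauer's theorem, each $f_i=L(s,\chi_i)$ is meromorphic on $\mathbb C$. Holomorphy of $f_i$ at $s_0$ is then equivalent to $n_i\ge 0$. Since any character of $G=\Gal(K/\mathbb Q)$ is a nonnegative integer combination of the $\chi_i$, and the Artin L-function is multiplicative under addition of characters, once all $f_i$ are shown to be holomorphic at $s_0$ every Artin L-function of $K/\mathbb Q$ will be holomorphic at $s_0$ as a product of the $f_i$. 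So it suffices to prove $n_i\ge 0$ for all $i$.

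Next I would bring in the two classical inputs from the Artin formalism. For a subgroup $H\leqslant G$ and a linear character $\lambda$ of $H$, invariance of Artin L-functions under induction gives $L(s,\lambda^G,K/\mathbb Q)=L(s,\lambda,K/K^H)$, which is a one-dimensional (Hecke) L-function and therefore holomorphic on $\mathbb C\setminus\{1\}$; as $s_0\ne 1$, it is holomorphic at $s_0$. Writing $\lambda^G=\sum_i a_i\chi_i$ with $a_i=\langle\lambda^G,\chi_i\rangle\ge 0$ (so that $a_i>0$ exactly when $\chi_i\in\Cons(\lambda^G)$) and using multiplicativity, I obtain
$$\ord_{s=s_0}L(s,\lambda^G)=\sum_{i=1}^r a_i n_i\ge 0.$$
This single inequality, ranging over all linear characters $\lambda$ of all subgroups $H$, is the engine of the proof. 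Separately, the hypothesis that $s_0$ is not a common zero of any two distinct $f_k,f_l$ translates into the combinatorial fact that at most one index $l$ satisfies $n_l>0$ (a zero of $f_i$ at $s_0$ means $n_i>0$, and two such would be a forbidden common zero).

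The heart of the argument is then a short contradiction. Suppose some $n_k<0$. If there is an index $l$ with $n_l>0$, then $k\ne l$ since $n_k<0<n_l$, and I apply the almost monomial property to the ordered pair $(\chi_k,\chi_l)$ to produce a linear $\lambda$ on some $H$ with $\chi_k\in\Cons(\lambda^G)$ and $\chi_l\notin\Cons(\lambda^G)$; if no such $l$ exists I instead separate $\chi_k$ from any other irreducible (the case $r=1$ being trivial, as then $f_1$ is the Riemann zeta function). In the resulting sum $\sum_i a_i n_i$ the coefficient $a_l$ of the only possibly positive order vanishes, so every contributing term has $n_i\le 0$ while the term $a_k n_k$ is strictly negative; hence $\sum_i a_i n_i<0$, contradicting the displayed inequality. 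Therefore $n_i\ge 0$ for all $i$, which completes the proof.

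I expect the main obstacle to be conceptual rather than computational: recognizing that almost monomiality should be used precisely to \emph{isolate} a hypothetical pole $\chi_k$ from the unique admissible zero $\chi_l$, so that the positivity coming from holomorphy of the monomial L-functions can be violated. The two hypotheses play complementary roles—the no-common-zero condition caps the number of positive orders at one, and almost monomiality lets me build a monomial character that sees the pole but misses that single zero—and the delicate point is ensuring the separation can always avoid the positive-order index. The classical facts (Brauer meromorphy, invariance under induction, and holomorphy of abelian L-functions away from $s=1$) are taken as inputs; the only genuinely new content is the linear-algebraic/combinatorial extraction just described.
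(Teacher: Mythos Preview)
The paper does not actually prove this theorem; it is quoted as the main result of \cite{lucrare} and no argument is given here. Your proposal is a correct proof and is in fact the standard argument from that reference: translate holomorphy into nonnegativity of the orders $n_i=\ord_{s_0}f_i$, use the Artin formalism so that holomorphy of $L(s,\lambda^G)$ for linear $\lambda$ yields $\sum_i\langle\lambda^G,\chi_i\rangle\,n_i\ge 0$, observe that the no-common-zero hypothesis forces at most one $n_l>0$, and then apply almost monomiality to the pair $(\chi_k,\chi_l)$ to kill the coefficient of the unique positive term while keeping a strictly negative contribution from a putative pole $n_k<0$.

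One small cosmetic point: in the case where no index $l$ has $n_l>0$ you do not need to invoke almost monomiality at all. Taking $H=\{1\}$ and $\lambda=1_H$ gives $\lambda^G=\sum_i\chi_i(1)\chi_i$, so $\sum_i\chi_i(1)\,n_i\ge 0$; if all $n_i\le 0$ and some $n_k<0$ this already gives the contradiction. This streamlines the argument slightly and avoids the separate discussion of $r=1$.
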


\section{Main results}

\begin{dfn}
Let $A$ be a nontrivial finite group and let $r\geq 1$ be an integer. Let $B$ be the direct product of $r$ copies of $A$, i.e.
$$B:=A\times A\times \cdots \times A = \{(a_1,a_2,\ldots,a_r)\;:\;a_j\in A,\;1\leq j\leq r\}.$$
We consider the automorphism
$$f:A\to A,\;f(a_1,\ldots,a_r):=(a_r,a_1,\ldots,a_{r-1}).$$
We consider the subgroup $T:=\langle f\rangle \subset Aut(B)$, which is cyclic of order $r$.
The \emph{wreath product} of $A$ by a cyclic group $C$ of order $r$, denoted by $W:=A\wr C$,
is the semidirect product of $B$ by $T$.
\end{dfn}

\begin{obs}\rm\label{obso}
Using the notations from the above definition, let $\chi\in \Irr(W)$ and assume that $\theta=\chi_B\in\Irr(B)$. 
It follows that $\theta$ is $T$-invariant in $W$ and
thus $\theta=\varphi\times \varphi \times \cdots \times \varphi$, where $\varphi\in \Irr(A)$.
\end{obs}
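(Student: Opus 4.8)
The plan is to prove the two assertions separately, using Clifford's theorem for the first and the structure of the character group of a direct product for the second. First I would note that $B$ is a normal subgroup of $W$ and $\chi\in\Irr(W)$, so Clifford's theorem governs the restriction $\chi_B$: in general $\chi_B=e(\theta_1+\cdots+\theta_t)$, where $\theta_1,\dots,\theta_t$ are the distinct $W$-conjugates of an irreducible constituent and $e=\langle\chi_B,\theta_1\rangle$. Our hypothesis is precisely that $\chi_B=\theta$ is irreducible, which forces $e=1$ and $t=1$; equivalently, the orbit of $\theta$ under $W$-conjugation on $\Irr(B)$ is the singleton $\{\theta\}$, i.e. $\theta$ is $W$-invariant. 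Since inner automorphisms of $B$ fix every irreducible character (characters being class functions), the conjugation action of $W$ on $\Irr(B)$ factors through $W/B\cong T$, so $W$-invariance of $\theta$ is the same as $T$-invariance. This gives the first claim.

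For the second claim I would use that the irreducible characters of the direct product $B=A\times\cdots\times A$ are exactly the external tensor products $\theta=\varphi_1\times\cdots\times\varphi_r$ with $\varphi_j\in\Irr(A)$, so that $\theta(a_1,\dots,a_r)=\varphi_1(a_1)\cdots\varphi_r(a_r)$. I would then compute how the generator $f$ of $T$ acts on such a $\theta$. Because $f$ is the cyclic shift on the coordinates of $B$, a direct computation of $\theta^f$ shows that $f$ cyclically permutes the tensor factors, namely $\theta^f=\varphi_r\times\varphi_1\times\cdots\times\varphi_{r-1}$.

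Imposing the $T$-invariance established in the first step, the equality $\theta^f=\theta$ forces $\varphi_1=\varphi_2=\cdots=\varphi_r$; writing $\varphi$ for this common value gives $\theta=\varphi\times\varphi\times\cdots\times\varphi$ with $\varphi\in\Irr(A)$, as desired. I do not anticipate a genuine obstacle, as the whole argument is a routine application of Clifford's theorem; the only point that requires care is fixing the conventions so that the conjugation action of $f$ on $\Irr(B)$ is correctly identified with the cyclic shift of the tensor factors, after which $T$-invariance immediately collapses all factors to a single $\varphi$.
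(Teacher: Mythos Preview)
Your argument is correct and is exactly the standard justification for this fact: Clifford's theorem forces $t=e=1$ so $\theta$ is $W$-invariant (equivalently $T$-invariant, since $B$ acts trivially on $\Irr(B)$), and then the decomposition $\theta=\varphi_1\times\cdots\times\varphi_r$ together with $\theta^f=\theta$ collapses all factors. The paper itself gives no proof of this remark; it is stated as a routine observation, and your write-up supplies precisely the expected details.
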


We recall the following result:

\begin{prop}(\cite[Lemma 9.5]{isaac2})\label{boboo}
If $C$ is a cyclic group of prime order $p>0$ and $A$ is monomial,
then $W=A\wr C$ is monomial. 
\end{prop}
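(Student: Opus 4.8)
The plan is to prove that $W = A \wr C$ is monomial by analyzing the irreducible characters of $W$ via Clifford theory, using the normal subgroup $B = A \times \cdots \times A$ (the base group) together with the fact that $W/B \cong C$ has prime order $p$. The central dichotomy, governed by Clifford theory applied to an irreducible $\chi \in \Irr(W)$ sitting over an irreducible constituent $\theta$ of $\chi_B$, is between two cases depending on whether the inertia (stabilizer) group $T_W(\theta)$ of $\theta$ in $W$ equals $B$ or equals all of $W$. Since $[W:B] = p$ is prime, these are the only two possibilities for the index of the inertia group.

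First I would treat the \textbf{stable case}, where $\theta$ is $W$-invariant. By Remark \ref{obso}, such a $\theta$ has the form $\varphi \times \cdots \times \varphi$ with $\varphi \in \Irr(A)$. Here I would invoke the structure of character extensions over a cyclic quotient: since $W/B \cong C$ is cyclic of order $p$, the invariant character $\theta$ extends to $W$, and the $p$ irreducible characters of $W$ lying over $\theta$ are exactly the $p$ extensions $\hat\theta \cdot \mu$, where $\mu$ ranges over the $p$ characters of $W/B$ inflated to $W$. To show each such character is monomial, I would use that $A$ is monomial: writing $\varphi = \psi^A$ for a linear character $\psi$ of some subgroup $U \leqslant A$, I would build a corresponding subgroup of $W$ and a linear character inducing the desired extension, combining the inductions on the base factors with the cyclic twist $\mu$ and invoking transitivity of induction.

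Next I would treat the \textbf{non-stable case}, where $T_W(\theta) = B$. Then Clifford theory (specifically the correspondence between $\Irr(W \mid \theta)$ and $\Irr(T_W(\theta) \mid \theta)$) tells us that $\chi = \theta^W$ is induced directly from $\theta$, which is an irreducible character of $B = A^p$. Since $A$ is monomial, $\varphi_j = \psi_j^{A}$ for linear characters $\psi_j$ of subgroups $U_j \leqslant A$, so $\theta = \theta_1 \times \cdots \times \theta_p$ is monomial on $B$, say $\theta = \lambda^B$ for a linear character $\lambda$ of $U_1 \times \cdots \times U_p \leqslant B$. Then by transitivity of induction $\chi = \theta^W = (\lambda^B)^W = \lambda^W$, so $\chi$ is monomial. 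The point that $\theta$ need not be of the symmetric form $\varphi \times \cdots \times \varphi$ in this case (it need only fail to be $T$-invariant) is exactly what forces $T_W(\theta) = B$, closing the argument.

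The main obstacle I expect is the stable case: producing an \emph{explicit} monomializing subgroup and linear character for each of the $p$ extensions $\hat\theta\cdot\mu$, and in particular verifying that the induction from the base factors can be compatibly twisted by the cyclic character $\mu$ of $W/B$ so that the result is a single \emph{linear} character inducing the whole thing. This requires choosing a subgroup $H \leqslant W$ that both projects onto $C$ and restricts on $B$ to $U_1 \times \cdots \times U_p$ (using the $T$-invariance of $\theta$, i.e.\ that $f$ permutes the $U_j$ cyclically so they may all be taken equal to a single $U \leqslant A$), and then checking the degree and constituent bookkeeping — that $\lambda^W$ is irreducible and equals precisely the intended extension. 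This is where the primality of $p$ and the monomiality of $A$ must be used in tandem.
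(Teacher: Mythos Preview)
Your proposal is correct and follows essentially the same route that the paper indicates (the result is quoted from Isaacs, with Lemma~\ref{lemaa} singled out as the key ingredient and the same two-case argument reused verbatim in the proof of Proposition~\ref{bobo}): the Clifford dichotomy over the prime-index base group $B$, handling the non-stable case by transitivity of induction from the monomial group $B=A^p$, and the stable case by writing $\theta=\varphi\times\cdots\times\varphi$, choosing $(U,\psi)$ with $\psi^A=\varphi$, and inducing a linear character from the $T$-invariant subgroup $U\times\cdots\times U$ extended by $T$. Your identification of the stable-case bookkeeping as the only delicate point is exactly where the paper defers to \cite[Lemma~9.5]{isaac2}.
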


A main ingredient in the proof is the following lemma:

\begin{lema}(\cite[Lemma 1.8]{isaac2})\label{lemaa}
Let $G$ be a finite group. Let $N\unlhd G$, where $|G:N|$ is prime and let $\theta\in \Irr(N)$. %Let $\chi\in \Irr(G)$ such that $\langle \theta, \chi_N \rangle>0$.
Then either $\theta$ has exactly $|G:N|$ conjugates in $G$ or else, for any $\chi\in \Irr(G)$ such that $\langle \theta, \chi_N \rangle>0$, we have $\chi_N=\theta$.
\end{lema}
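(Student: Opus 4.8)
The plan is to analyze the situation through the inertia group $I=I_G(\theta)=\{g\in G:\theta^g=\theta\}$ and to invoke the classical fact that the number of $G$-conjugates of $\theta$ equals the index $|G:I|$. First I would observe that $N\leqslant I\leqslant G$: indeed, for $n\in N$ and $x\in N$ we have $\theta^n(x)=\theta(nxn^{-1})=\theta(x)$ since $\theta$ is a class function on $N$, so $\theta$ is $N$-invariant. Writing $p=|G:N|$, which is prime, the only subgroups lying between $N$ and $G$ are $N$ and $G$ themselves (if $N\leqslant H\leqslant G$ then $p=|G:H|\,|H:N|$ forces one factor to be $1$); hence $I\in\{N,G\}$. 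If $I=N$, then $\theta$ has exactly $|G:I|=|G:N|$ conjugates in $G$, which is the first alternative, and nothing further is needed.

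The substantive case is $I=G$, i.e.\ $\theta$ is $G$-invariant, where I must show that every $\chi\in\Irr(G)$ with $\langle\theta,\chi_N\rangle>0$ satisfies $\chi_N=\theta$. By Frobenius reciprocity, $\langle\theta,\chi_N\rangle=\langle\theta^G,\chi\rangle$, so these $\chi$ are precisely the irreducible constituents of the induced character $\theta^G$. The key computation is the restriction formula $(\theta^G)_N=\sum_{gN\in G/N}\theta^g$; since $\theta$ is $G$-invariant every summand equals $\theta$, giving $(\theta^G)_N=p\,\theta$. By Clifford's theorem, $\theta$ being the unique conjugate forces $\chi_N=e_\chi\theta$ for some positive integer $e_\chi$ for each constituent $\chi$, and then $e_\chi=\langle\theta,\chi_N\rangle=\langle\theta^G,\chi\rangle$, so $e_\chi$ is simultaneously the restriction multiplicity and the multiplicity of $\chi$ in $\theta^G$.

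Writing $\theta^G=\sum_\chi e_\chi\chi$, I would compute the norm $\langle\theta^G,\theta^G\rangle=\langle\theta,(\theta^G)_N\rangle=\langle\theta,p\theta\rangle=p$, which yields the Diophantine relation $\sum_\chi e_\chi^2=p$. Since $p$ is prime it is not a perfect square, so no single term can satisfy $e_\chi^2=p$; the only way to express $p$ as a sum of positive squares is therefore as $p$ summands each equal to $1$. Consequently every constituent has $e_\chi=1$, so $\chi_N=\theta$ exactly as required, and as a byproduct $\theta^G$ is seen to be a sum of $p$ distinct irreducible characters.

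I expect the only points requiring care to be the justification of the restriction formula $(\theta^G)_N=\sum_{gN}\theta^g$ and the precise form of Clifford's theorem at the normal subgroup $N$, both of which are standard; the concluding arithmetic step ($\sum_\chi e_\chi^2=p$ with $p$ prime forces all $e_\chi=1$) is immediate once the norm has been evaluated. No genuine obstacle arises, the argument being a clean instance of Clifford theory at a normal subgroup of prime index.
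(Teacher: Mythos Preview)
The paper does not give its own proof of this lemma; it is quoted verbatim from Isaacs with a citation. So there is no in-paper argument to compare against, and your write-up must stand on its own.

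Your overall strategy is the standard one and is correct up to the final step: reduce to the inertia group, use $N\leqslant I\leqslant G$ and primality of $|G:N|$ to get the dichotomy $I=N$ or $I=G$, and in the invariant case compute $\langle\theta^G,\theta^G\rangle=p$ to obtain $\sum_\chi e_\chi^2=p$. The gap is in the concluding arithmetic. Your claim that ``the only way to express $p$ as a sum of positive squares is as $p$ summands each equal to $1$'' is simply false: for instance $5=2^2+1^2$ and $13=3^2+2^2$. Ruling out a single term $e_\chi^2=p$ does not by itself force all $e_\chi=1$.

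What is missing is one further structural input. A clean, elementary fix is to let the abelian group $\Irr(G/N)\cong\mathbb Z/p\mathbb Z$ act on the constituents of $\theta^G$ by $\beta\cdot\chi=\beta\chi$; this preserves $e_\chi$ since $(\beta\chi)_N=\chi_N$. A fixed point would mean $\beta\chi=\chi$ for some nontrivial linear $\beta$, forcing $\chi$ to vanish off $N$; then $1=\langle\chi,\chi\rangle=\frac{1}{p}\langle\chi_N,\chi_N\rangle=\frac{e_\chi^2}{p}$, contradicting that $p$ is not a square. Hence every orbit has size $p$, and since the $e_\chi$ are constant on orbits, a single orbit already contributes $p\,e^2$ to $\sum e_\chi^2=p$, forcing $e=1$ and a unique orbit. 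Alternatively, one may invoke the fact that an invariant $\theta$ extends when $G/N$ is cyclic and then apply Gallagher; either route closes the gap your purely arithmetic sentence left open.
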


\begin{lema}\label{prod}
Let $G_1,G_2$ be two quasi-monomial groups. Then $G_1\times G_2$ is quasi-monomial.
\end{lema}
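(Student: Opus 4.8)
The plan is to exploit the standard description of the irreducible characters and of induction for a direct product. First I would recall that every irreducible character of $G_1\times G_2$ has the form $\chi=\chi_1\times\chi_2$ with $\chi_i\in\Irr(G_i)$, where $(\chi_1\times\chi_2)(g_1,g_2)=\chi_1(g_1)\chi_2(g_2)$; this is the same external-product notation already used in Remark \ref{obso}. Fixing such a $\chi$, the goal is to exhibit a subgroup $H\leqslant G_1\times G_2$ together with a linear character $\lambda$ of $H$ satisfying $\lambda^{G_1\times G_2}=d\chi$ for some positive integer $d$.

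Since each $G_i$ is quasi-monomial, for $i=1,2$ there exist a subgroup $H_i\leqslant G_i$ and a linear character $\lambda_i$ of $H_i$ with $\lambda_i^{G_i}=d_i\chi_i$ for some positive integer $d_i$. I would then take $H:=H_1\times H_2\leqslant G_1\times G_2$ and $\lambda:=\lambda_1\times\lambda_2$. Because $\lambda(1)=\lambda_1(1)\lambda_2(1)=1$, the character $\lambda$ is linear on $H$, so these are legitimate data for a quasi-monomial presentation.

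The decisive step is the compatibility of induction with the external tensor product, namely $(\lambda_1\times\lambda_2)^{G_1\times G_2}=\lambda_1^{G_1}\times\lambda_2^{G_2}$. Granting this identity, a direct computation gives
$$\lambda^{G_1\times G_2}=\lambda_1^{G_1}\times\lambda_2^{G_2}=(d_1\chi_1)\times(d_2\chi_2)=d_1d_2\,(\chi_1\times\chi_2)=d\,\chi,$$
with $d:=d_1d_2$ a positive integer. As $\chi\in\Irr(G_1\times G_2)$ was arbitrary, every irreducible character of $G_1\times G_2$ is quasi-monomial, and hence $G_1\times G_2$ is quasi-monomial.

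The only genuine content is the induction-product identity $\Ind$; everything else is bookkeeping about products of characters. I expect this identity to be the single point requiring care, and it can be checked either directly from the defining formula for $\theta^G$ (using that a transversal for $H$ in $G_1\times G_2$ can be taken as the product of transversals for the $H_i$ in the $G_i$) or by invoking the well-known fact that induction commutes with external tensor products. No further obstacle is anticipated.
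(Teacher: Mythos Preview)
Your proposal is correct and follows essentially the same argument as the paper: pick $\chi=\chi_1\times\chi_2$, use quasi-monomiality of each $G_i$ to get $(H_i,\lambda_i,d_i)$, and conclude via $(\lambda_1\times\lambda_2)^{G_1\times G_2}=\lambda_1^{G_1}\times\lambda_2^{G_2}=(d_1d_2)\chi$. If anything, you are slightly more explicit than the paper in noting that $\lambda_1\times\lambda_2$ is linear and in flagging the induction-product identity as the one nontrivial ingredient.
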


\begin{proof}
It is well known that $\Irr(G_1\times G_2)=\Irr(G_1)\times \Irr(G_2)$. 

Let $\chi=(\chi_1,\chi_2)\in \Irr(G_1\times G_2)$.
Since $G_1$ and $G_2$ are quasi-monomial, it follows that there exists some subgroups $H_1\leqslant G_1$, $H_2\leqslant G_2$
some liniear characters $\lambda_1$ of $H_1$, $\lambda_2$ of $H_2$, and some positive integers $d_1$, $d_2$ such that
$ \lambda_1^{G_1}=d_1\chi_1 \text{ and } \lambda_2^{G_2}=d_2\chi_2.$
It follows that 
$$(\lambda_1 \times \lambda_2)^{G_1\times G_2} = \lambda_1^{G_1} \times \lambda_2^{G_2} = d_1\chi_1 \times d_2\chi_2 = (d_1d_2) \chi_1\times\chi_2 = (d_1d_2)\chi.$$
Therefore, the group $G_1\times G_2$ is quasi-monomial.
\end{proof}

%We recall the following theorem; see \cite[Theorem 1.7]{isaac2}.

%\begin{teor}(Gallagher correspondence)\label{gal}
%Let $G$ be a finite group. Let $N\unlhd G$, and suppose $\psi\in\Irr(G)$ such that $\psi_N\in \Irr(N)$. Then the map $\beta\to\beta\psi$ is
%a bijection from $\Irr(G/N)$ onto $\Irr(G|\theta)$, where $\Irr(G|\theta)$ denote the set of irreducible characters $\chi$
%of $G$ such that $\langle \chi,\theta^G \rangle >0$.
%\end{teor}
Considering Remark \ref{obso}, we prove the following slight generalization of \ref{boboo}:

\begin{prop}\label{bobo}
Let $C$ be a cyclic group of prime order $p>0$ and let $A$ be a quasi-monomial group.
Let $W=A\wr C$ be the wreath product of $A$ by $C$. We assume that for any $\chi\in \Irr(W)$
with $\chi_B = \varphi\times \varphi \times \cdots \times \varphi \in\Irr(B)$, the character $\varphi\in \Irr(A)$ is monomial.

Then $W=A\wr C$ is quasi-monomial.
\end{prop}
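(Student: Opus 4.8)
The plan is to run Clifford theory with respect to the normal subgroup $B\unlhd W$, noting that $W/B\cong C$ is cyclic of prime order $p$. Fix $\chi\in\Irr(W)$ and let $\theta$ be an irreducible constituent of $\chi_B$. Every irreducible character of $B=A\times\cdots\times A$ has the form $\theta=\varphi_1\times\varphi_2\times\cdots\times\varphi_p$ with $\varphi_j\in\Irr(A)$, and $T=\langle f\rangle$ acts on $\Irr(B)$ by cyclically permuting the factors; since $p$ is prime, the $T$-orbit of $\theta$ has size $1$ or $p$. I would therefore apply Lemma \ref{lemaa} with $N=B$ to split into two cases: either $\theta$ has exactly $p$ conjugates in $W$ (equivalently, the $\varphi_j$ are not all equal), or $\chi_B=\theta$ is $T$-invariant, in which case Remark \ref{obso} forces $\theta=\varphi\times\varphi\times\cdots\times\varphi$ for a single $\varphi\in\Irr(A)$.

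In the first case the inertia group of $\theta$ is $B$, so by the Clifford correspondence $\chi=\theta^W$. Since $A$ is quasi-monomial, iterating Lemma \ref{prod} shows $B$ is quasi-monomial, so there is a subgroup $H\leqslant B$ and a linear character $\lambda$ of $H$ with $\lambda^B=d\theta$ for some positive integer $d$ (explicitly $\lambda=\lambda_1\times\cdots\times\lambda_p$ with $\lambda_j^A=d_j\varphi_j$). Regarding $H$ as a subgroup of $W$ and using transitivity of induction, $\lambda^W=(\lambda^B)^W=d\,\theta^W=d\chi$, so $\chi$ is quasi-monomial.

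In the second case the hypothesis guarantees that $\varphi$ is monomial, say $\varphi=\mu^A$ with $\mu$ a linear character of a subgroup $U\leqslant A$. I would form the linear character $\nu:=\mu\times\cdots\times\mu$ of $U^p:=U\times\cdots\times U\leqslant B$, which satisfies $\nu^B=\theta$ and is $T$-invariant, hence extends to a linear character $\widetilde\nu$ of $V:=U\wr C=U^p\rtimes T\leqslant W$. Because $W=VB$ and $V\cap B=U^p$, Mackey's theorem gives $(\widetilde\nu^W)_B=\nu^B=\theta\in\Irr(B)$; thus $\widetilde\nu^W$ is irreducible, lies over $\theta$, and has degree $[A:U]^p=\chi(1)$, so it is one of the $p$ irreducible characters of $W$ over $\theta$. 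Letting $\widetilde\nu$ range over the $p$ extensions of $\nu$ — equivalently, multiplying $\widetilde\nu^W$ by the $p$ linear characters of $W/B$ and using Gallagher's theorem — produces all $p$ of these characters, each exhibited as the induction of a linear character. Hence every $\chi$ in this case is in fact monomial, and combining the two cases shows $W$ is quasi-monomial.

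The step I expect to be the main obstacle is this last case: checking that $\widetilde\nu^W$ stays irreducible and that the $p$ extensions of $\nu$ exhaust $\Irr(W\mid\theta)$. This is precisely where the hypothesis that $\varphi$ be monomial (and not merely quasi-monomial) is indispensable, since it is what makes $\nu^B=\theta$ irreducible; if $\varphi$ were only quasi-monomial one would instead get $\nu^B=d^p\theta$, destroying the irreducibility of the induced character and breaking the argument.
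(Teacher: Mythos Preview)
Your proposal is correct and follows exactly the paper's approach: the same Clifford-theoretic case split via Lemma~\ref{lemaa}, the quasi-monomiality of $B$ (Lemma~\ref{prod}) handling the case $\chi=\theta^W$, and the wreath-subgroup $U\wr C$ with an extension of $\mu\times\cdots\times\mu$ handling the invariant case. The paper simply defers that second case to the proof of \cite[Lemma~9.5]{isaac2}, whereas you have written out those details (the Mackey/Gallagher computation) explicitly; your remark on why genuine monomiality of $\varphi$ is needed is also on point.
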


\begin{proof}
Let $\chi\in \Irr(G)$. Let $B$ the base of the wreath product $W$, so $|W:B|=p$ is prime. 
Let $\theta\in \Irr(B)$ with $\langle \theta, \chi_B \rangle>0$. From \ref{lemaa} it follows
that either i) $\chi=\theta^G$, either ii) $\chi_B=\theta$.

Assume that $\chi=\theta^G$. Since $A$ is quasi-monomial, according to Lemma \ref{prod}, the group $B=A\times A\times \cdots \times A$ is also quasi-monomial.
Since $\theta\in\Irr(B)$, it follows that there exists $H\leqslant B$, $\lambda\in \Lin(H)$ and $d>0$ such that $\theta=d\lambda^B$.
If $\chi=\theta^G$, then $\chi=d\lambda^G$, thus $\chi$ is quasi-monomial.

We can assume that $\chi_B=\theta$ and thus $\theta$ is invariant in $W$. Then $\theta=\varphi \times \varphi \times \cdots \times \varphi$, 
where $\varphi \in \Irr(A)$. By our assumption, there exist $H\leqslant A$, $\lambda\in \Lin(A)$ such that $\lambda^A = \varphi$. The rest of
the proof is identical to the last part of the proof of \cite[Lemma 9.5]{isaac2}, hence we skip it.
\end{proof}

% \begin{obs}\rm
% Without the additional assumption, i.e. $\varphi\in \Irr(A)$ is monomial, we don't know if the result remain true. We note that Lemma $9.5$ from \cite{isaac2}
% was an essential step to prove the Taketa's theorem, that is: any monomial group is solvable. If Proposition \ref{bobo} would be true without the assumption
% on $\varphi$, then it will follow that any quasi-monomial group is solvable, a result which is unknown. As we already mentioned, only for a special class
% of quasi-monomial groups it is known that they are solvable.
% \end{obs}

Our main result is the analog of Proposition \ref{bobo}, in the framework of almost monomial groups, with 
some technical condition added:

\begin{teor}\label{main}
Let $A$ be an almost monomial group and let $C$ be a cyclic group of prime order $p>0$. Let $W=A \wr C$ be
the wreath product of $A$ with $C$. Let $B\unlhd W$ be the base group of the wreath product $W$.

Assume that for any $\chi\in \Irr(W)$ with $\chi_B\in\Irr(B)$ and any non trivial character $\beta$ of $W/B$,
there exists a subgroup $H\leqslant W$ and a linear character $\lambda$ of $H$ such that $\chi\in \Cons(\lambda^W)$
and $\beta\chi \notin \Cons(\lambda^W)$.

Then $W=A \wr C$ is almost monomial.
\end{teor}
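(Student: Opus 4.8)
The plan is to combine Clifford theory along the normal subgroup $B\unlhd W$, where $|W:B|=p$ is prime, with the almost monomiality of the base group. First I would record that $B=A\times\cdots\times A$ ($p$ factors) is almost monomial: this follows from the hypothesis on $A$ together with Theorem \ref{22}(3) applied inductively. To every $\chi\in\Irr(W)$ I attach the $W$-orbit $O_\chi=\Cons(\chi_B)\subseteq\Irr(B)$ of irreducible constituents of $\chi_B$; by Lemma \ref{lemaa} exactly one of two things happens: either $|O_\chi|=p$ and $\chi=\theta^W$ is induced from any $\theta\in O_\chi$, or $|O_\chi|=1$, $\theta=\chi_B\in\Irr(B)$ is $W$-invariant (so $\theta=\varphi\times\cdots\times\varphi$ as in Remark \ref{obso}) and the $p$ irreducible characters lying over $\theta$ are exactly $\chi,\beta\chi,\dots,\beta^{p-1}\chi$, with $\beta$ ranging over $\Irr(W/B)$. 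Throughout I would use the reciprocity identity $\langle\chi,\mu^W\rangle=\langle\chi_B,\mu^B\rangle$, valid for any linear $\mu$ on a subgroup $H\leqslant B$ since $\mu^W=(\mu^B)^W$; combined with $\Cons(\chi_B)=O_\chi$ it shows that $\langle\chi,\mu^W\rangle>0$ exactly when $O_\chi$ meets $\Cons(\mu^B)$, and vanishes exactly when they are disjoint. This converts separation in $W$ into control of orbits inside $B$.

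Given $\chi\neq\phi$ in $\Irr(W)$, the argument splits according to whether $O_\chi$ and $O_\phi$ coincide or are disjoint (orbits are equal or disjoint). If $O_\chi=O_\phi$ and this common orbit has size $p$, then $\chi=\theta^W=\phi$, a contradiction; hence a common orbit must be a singleton $\{\theta\}$ with $\theta$ $W$-invariant, and then $\phi=\beta\chi$ for some nontrivial $\beta\in\Irr(W/B)$, with $\chi_B=\theta\in\Irr(B)$. This is precisely the configuration addressed by the technical hypothesis, which yields $H\leqslant W$ and a linear $\lambda$ with $\chi\in\Cons(\lambda^W)$ and $\phi=\beta\chi\notin\Cons(\lambda^W)$. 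Since $\phi_B=\theta\in\Irr(B)$ as well and $\chi=\beta^{-1}\phi$ with $\beta^{-1}$ nontrivial, applying the hypothesis to the pair $(\phi,\beta^{-1})$ gives the reverse separation, so the unordered pair $\{\chi,\phi\}$ is separated in both directions.

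The substantial case is $O_\chi\cap O_\phi=\emptyset$. Fix $\theta\in O_\chi$ and $\eta\in O_\phi$; by the reciprocity identity it suffices to find a linear character $\mu$ of a subgroup $H\leqslant B$ with $\theta\in\Cons(\mu^B)$ and $\Cons(\mu^B)\cap O_\phi=\emptyset$, for then $\langle\chi,\mu^W\rangle>0$ while $\langle\phi,\mu^W\rangle=0$. Here I would exploit the product structure: taking $H=H_1\times\cdots\times H_p$ and $\mu=\mu_1\times\cdots\times\mu_p$ gives $\Cons(\mu^B)=\Cons(\mu_1^A)\times\cdots\times\Cons(\mu_p^A)$, so $\mu^B$ contains $\theta=(\theta_1,\dots,\theta_p)$ iff each $\theta_j\in\Cons(\mu_j^A)$, while a given conjugate of $\eta$ is excluded as soon as it is excluded in a single coordinate. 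Almost monomiality of $A$ lets me realise, in each coordinate, a linear $\mu_j$ whose induced constituent set contains $\theta_j$ and omits any one prescribed character $x_j\neq\theta_j$ (further omissions only help). Since $O_\phi$ consists of the at most $p$ cyclic shifts of $\eta=(\psi_1,\dots,\psi_p)$, the task becomes combinatorial: choose the excluded values $x_j\neq\theta_j$ so that every cyclic shift of $\eta$ fails in some coordinate.

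I expect this last selection to be the main obstacle, since it is exactly where the pairwise separation guaranteed by almost monomiality must be upgraded to separation of $\theta$ from an entire $W$-orbit. I would model it as a Hall/system-of-distinct-representatives problem on $\mathbb{Z}/p$: coordinate $j$ is allowed to block shift $k$ precisely when $\theta_j\neq\psi_{j-k}$ (indices mod $p$), and I seek a matching assigning distinct coordinates to the shifts, setting $x_j=\psi_{j-k}$ along each matched pair. The hypothesis $O_\chi\cap O_\phi=\emptyset$ means $\theta$ is not a cyclic shift of $\eta$, so no shift is unblockable; a singleton violating set would force $\theta$ to be a shift of $\eta$, and the full set would force $\eta$ to be $W$-invariant, both excluded, while the intermediate cases should be ruled out using that $p$ is prime, via a Cauchy--Davenport estimate showing the relevant translates of the blocking set cover $\mathbb{Z}/p$ and collapse the values of $\eta$. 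Verifying Hall's condition in full generality, and thereby producing the single linear character that simultaneously meets $\theta$ and avoids all of $O_\phi$, is the technical heart; once it is in place the different-orbit case closes and, together with the same-orbit case, shows that $W$ is almost monomial.
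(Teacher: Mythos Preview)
Your overall architecture matches the paper's: reduce via Clifford theory along $B\unlhd W$ using Lemma~\ref{lemaa}, identify the same-orbit singleton case as the one requiring the technical hypothesis (the paper reaches it in its case (ii.b) through Gallagher, exactly as you do), and in the disjoint-orbit case build a separating linear character on a product subgroup $H_1\times\cdots\times H_p\leqslant B$, using the identity $\langle\chi,\mu^W\rangle=\langle\chi_B,\mu^B\rangle$ for $\mu$ linear on $H\leqslant B$. The paper organizes this as a four-way split (i)/(ii) $\times$ (a)/(b) rather than by orbit coincidence, but the content is the same; in particular your uniform disjoint-orbit treatment subsumes the paper's easier cases (i.b), (ii.a), and the $\theta\neq\eta$ half of (ii.b), which the paper dispatches with short direct arguments.

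The genuine divergence is the combinatorial core when both orbits have size $p$. The paper first peels off the easy sub-case where some $\eta_j$ is absent from $\{\theta_1,\dots,\theta_p\}$ (exclude that single $\eta_j$ in every coordinate and every cyclic shift is blocked), and for the remaining sub-case runs a hands-on greedy construction: it selects indices $i_1,i_2,\dots$ one at a time, each $i_t$ chosen because $\theta_{i_t}$ differs from a run of consecutive shifted $\eta$-values, until all $p$ shifts are covered. You instead cast the whole selection as a Hall/SDR problem on the bipartite graph ``coordinate $j$ can block shift $k$'' and plan to verify Hall's condition via Cauchy--Davenport. This is a legitimately different route: the paper's greedy argument is elementary and self-contained, while your Hall formulation is more structural and treats the two sub-cases uniformly. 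Be aware, though, that your sketch only checks Hall for singleton and full deficient sets; the promised ``intermediate cases'' really do require an additional argument (a Hall-violating pair $S,T$ with $|T|<|S|$ forces $\eta$ to be constant on each translate $j-S$ for $j\notin T$, and one must show these translates overlap enough---this is where $p$ prime and Cauchy--Davenport enter---to make $\eta$ globally constant, contradicting $|O_\phi|=p$). That step is doable along the lines you indicate, but it is not yet written, and it is precisely the place where the paper's explicit construction does the work directly.
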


\begin{proof}
Let $\chi\neq \phi\in \Irr(W)$. Let $\theta$ be a constituent of
$\chi_B$. Since $|W:B|=p$ is prime, according to Lemma \ref{lemaa}, it follows that either i) $\chi=\theta^W$ or ii) $\chi_B=\theta$.

Assume $\theta=\theta_1\times \cdots \times \theta_p$, where $\theta_j\in \Irr(A)$, $1\leq j\leq p$.
\begin{enumerate}
 \item[(i)] If $\chi=\theta^W$, then $\chi_B$ is the sum of conjugates of $\theta$ in $W$, that is
$$\chi_B=\theta_1\times \theta_2 \times \cdots \times \theta_p + \theta_p\times \theta_1\times \cdots \times \theta_{p-1}+\cdots+\theta_2\times \cdots\times \theta_p\times \theta_1,\; |\{\theta_1,\ldots,\theta_p\}|\geq 2.$$
We denote $(12\ldots p)$ the cyclic permutation $(12\cdot p)(j):=\begin{cases} j+1,& j<p,\\ 1,& j=p \end{cases}$. Also, we denote 
$C_p:=\langle (12\cdots p )\rangle$ the cyclic subgroup generated by $(12\ldots p)$ in $\Sigma_p$, the group of permutations of order $p$. For $\sigma\in C_p$, we let
$\sigma(\theta_1\times \cdots \times \theta_p):=\theta_{\sigma(1)}\times \cdots \times \theta_{\sigma(p)}$.
With these notations, we have 
$\chi_B=\sum_{\sigma\in C_p}\sigma(\theta).$
 \item[(ii)] If $\chi_B=\theta$, then $\theta$ is invariant in $W$, hence $\theta=\theta_1\times\cdots\times \theta_1$, where $\theta_1 \in \Irr(A)$.
\end{enumerate}
Analogously, if $\eta$ is a constituent of $\phi_B$, then either a) $\eta=\phi^W$ or b) $\phi_B=\eta$. 

Assume $\eta=\eta_1\times \cdots \times \eta_p$, where $\eta_j\in \Irr(A)$, $1\leq j\leq p$.
\begin{enumerate}
 \item[(a)] $\phi=\eta^W$ and $\phi_B = \eta_1\times \eta_2 \times \cdots \times \eta_p + \eta_p\times \eta_1\times \cdots \times \eta_{p-1}+\cdots+\eta_2\times \cdots\times \eta_p\times \eta_1$,\; $|\{\eta_1,\ldots,\eta_p\}|\geq 2$. 
%where $\eta_j\in\Irr(A)$, $1\leq j\leq p$, and 
            In other words, $\phi_B=\sum_{\sigma\in C_p}\sigma(\eta)$.
 \item[(b)] $\phi_B=\eta=\eta_1\times\eta_1\times\cdots \times \eta_1$, where $\eta_1\in \Irr(A)$.
\end{enumerate}
We have to consider several cases:
\begin{enumerate}
\item[(i.a)] Assume there exists $1\leq j\leq p$ such that $\eta_j\notin \{\theta_1,\ldots,\theta_p\}$.
             Since $A$ is almost monomial, it follows that for any $1\leq i\leq p$, there exists a subgroup $H_i\leqslant A$ and $\lambda_i$ a linear character of 
						 $H_i$ such that $\langle \lambda_i^A,\theta_i \rangle > 0$
             and $\langle \lambda_i^A,\eta_j \rangle = 0$. Let $H:=H_1\times\cdots\times H_p$ and $\lambda:=\lambda_1\times \cdots\times\lambda_p$. 
						 Note that $\lambda$ is linear.
             Since $\theta\in\Cons(\chi_B)$, it follows that 
             $$\langle \lambda^W,\chi \rangle = \langle \lambda^B,\chi_B \rangle \geq \langle \lambda^B,\theta \rangle = \prod_{i=1}^p 
						  \langle \lambda_i^A,\theta_i \rangle > 0.$$
						 						
             Since, by (a), we have 
						 $\phi_B=\sum_{\sigma\in \langle (12\cdots p )\rangle} \eta_{\sigma(1)}\times \eta_{\sigma(2)}\times \cdots \times \eta_{\sigma(p)},$
						  it follows that
             $$\langle \lambda^W,\phi \rangle = \langle \lambda^B,\phi_B \rangle = \sum_{\sigma\in \langle(12\cdots p)\rangle} 
						 \prod_{i=1}^p \langle \lambda_i^A,\eta_{\sigma(i)} \rangle  = 0.$$

             Now, assume $\{\eta_1,\ldots,\eta_p\}\subseteq \{\theta_1,\ldots,\theta_p\}$.
             We claim that $\Cons(\chi_B)\cap\Cons(\phi_B)=\emptyset$. Indeed, if $\Cons(\chi_B)\cap \Cons(\phi_B)\neq\emptyset$, then there
             exists $\sigma\in C_p$ such that $\eta=\sigma(\theta)$. It follows that $\phi=\eta^W=(\sigma(\theta))^W=\theta^W=\chi$, a contradiction.
             
             Since $\Cons(\chi_B)\cap \Cons(\phi_B)=\emptyset$, it follows that 
             for any permutation $\sigma \in \langle (12\ldots p)\rangle \subset S_p$, we have
             that $$\theta_1\times \theta_2 \times \cdots \times \theta_p \neq 
						 \eta_{\sigma(1)}\times \eta_{\sigma(2)} \times \cdots \times \eta_{\sigma(p)} .$$
						 We use the convention $\theta_{j+p}=\theta_j$, $\eta_{j+p}=\eta_j$, for any $1\leq j\leq p$.

						 Let $1\leq i_1\leq p$ such that $\theta_{i_1}\neq \eta_{i_1}$. Let $1\leq j_1\leq p$ be the largest
						 integer with the property $\theta_{i_1}\neq \eta_{i_1},\;\theta_{i_1}\neq \eta_{i_1+1},\;\ldots,
						 \theta_{i_1}\neq \eta_{i_1+j_1-1}.$
						  If $j_1=p$, then we stop. 

Assume that $j_1<p$. Since $\theta_{i_1}=\eta_{i_1+j_1}$, we can choose				
							an index $i_2\neq i_1$ such that $\theta_{i_2}\neq \eta_{i_2+j_1}$.
							Let $1\leq j_2\leq n-j_1$ be the largest integer such that
							$\theta_{i_1}\neq \eta_{i_1+j_1} \text{ or } \theta_{i_2}\neq \eta_{i_2+j_1},\; 
							\theta_{i_1}\neq \eta_{i_1+j_1+1} \text{ or } \theta_{i_2}\neq \eta_{i_2+j_1+1},\;
							\theta_{i_1}\neq \eta_{i_1+j_1+j_2-1} \text{ or } \theta_{i_2}\neq \eta_{i_2+j_1+j_2-1}.$
							
							If $j_1+j_2=p$, then we stop. 

                                                         If $j_1+j_2<p$, as $\theta_{i_1} = \eta_{i_1+j_1+j_2}$ and
                                                         $\theta_{i_2}=\eta_{i_2+j_1+j_2}$, then we can choose $i_3\notin \{i_1,i_2\}$ such that
							$\theta_{i_3}\neq \eta_{i_2+j_1+j_2}$ etc.

                                                        Using the same procedure, we construct two sequences
							of positive integers $i_1,i_2,\ldots,i_s$ and $j_1,j_2,\ldots,j_s$ such that $|\{i_1,\ldots,i_s\}|=s\leq p$,
							 $j_1+\ldots+j_s=p$ and for any $1\leq \ell\leq p$, there exists some $1\leq t\leq s$ and such that 
							 $\theta_{i_t}\neq \eta_{i_t+\ell-1}$.
							
							For $1\leq t \leq s$, we choose $H_{i_t}$ a subgroup in $A$ and $\lambda_{i_t}$ a linear character of $H_{i_t}$ such that
							$\langle \lambda_{i_t}^A,\theta_{i_t} \rangle \neq 0 \text{ and } 
							\langle \lambda_{i_t}^A,\eta_{i_t+j_1+\ldots+j_{t-1}} \rangle = 0.$
							For $i\in \{1,\ldots,p\}\setminus \{i_1,\ldots,i_s\}$, we let $H_{i}$ to be the trivial subgroup of $A$
							and $\lambda_i = 1_{H_{i}}$, the trivial character of $H_i$.
							
							We let $H:=H_1\times \cdots \times H_p$ and $\lambda=\lambda_1\times \cdots \times \lambda_p$, which is a linear character of $H$.
							From the construction of $\lambda$, it is easy to check that $\langle \lambda^B,\theta \rangle \neq 0$,
							hence $\langle \lambda^W,\chi \rangle = \langle \lambda^B,\chi_B \rangle \neq 0$. Also,
              $ \langle \lambda^W,\phi \rangle = \langle \lambda^B,\phi_B \rangle = \sum_{\ell=1}^p \prod_{j=1}^p 
							\langle \lambda_j^A, \eta_{j+\ell-1} \rangle =0.$		
\item[(i.b)] Since $A$ is almost monomial, from Theorem \ref{22}(3) it follows that $B$ is almost monomial. Obviously, $\theta \neq \eta$,
             hence there exists a subgroup $H\leqslant B$ and a linear 
             character $\lambda$ of $H$ such that $\langle \lambda^B, \theta \rangle > 0$ and $\langle \lambda^B, \eta \rangle = 0$. 
						 Since $\langle \eta, \chi_B \rangle > 0$, it follows that
             $\langle \lambda^W,\chi \rangle = \langle \lambda^B,\chi_B \rangle > 0.$
             On the other hand, as $\phi_B=\eta$, it follows that $\langle \lambda^W,\phi \rangle = \langle \lambda^B,\eta \rangle = 0$.
\item[(ii.a)] We have $\theta=\theta_1\times \cdots \times \theta_1$ and $\eta=\eta_1\times \cdots \times \eta_p$.
              Obviously, $\theta\neq \eta$, hence we may assume that $\theta_1\neq \eta_1$. Let $H_1\leqslant A$ be a subgroup and 
							$\lambda_1$ a linear character of $H$ such that 
              $\langle \lambda_1^A, \theta_1 \rangle \neq 0$ and $\langle \lambda_1^A, \eta_1 \rangle = 0$. Let $H=H_1\times\cdots\times H_1$
							and $\lambda=\lambda_1\times \cdots \times \lambda_1$. We have that $\lambda$ is linear and
              $$\langle \lambda^W,\chi \rangle = \langle \lambda^B,\chi_B \rangle = 
              \langle \lambda_1^A\times \cdots \times \lambda_1^A, \theta\times \cdots \times \theta \rangle = 
							(\langle \lambda_1^A,\theta \rangle) ^ p \neq 0.$$
              On the other hand, since $\phi_B=\sum_{\sigma\in \langle (12\cdots p )\rangle} \eta_{\sigma(1)}\times \eta_{\sigma(2)}\times
							\cdots \times \eta_{\sigma(p)}$, it follows that
              $$\langle \lambda^W,\phi \rangle = \langle \lambda^B,\phi_B \rangle = p \prod_{j=1}^p \langle \lambda_1^A,\eta_j \rangle = 0.$$
\item[(ii.b)] If $\theta\neq \eta$ then, as $B$ is almost monomial, there exists a subgroup $H\leqslant B$ and a linear 
              character $\lambda$ of $H$ such that $\langle \lambda^B, \theta \rangle > 0$ and $\langle \lambda^B, \eta \rangle = 0$.
              We have that $$\langle \lambda^W,\chi \rangle = \langle \lambda^B,\chi_B \rangle =  \langle \lambda^B,\theta \rangle > 0 
							\text{ and }
              \langle \lambda^W,\phi \rangle = \langle \lambda^B,\phi_B \rangle =  \langle \lambda^B,\eta \rangle = 0.$$
              Now, assume $\chi_B=\phi_B=\theta=\theta_1\times \theta_1 \times \cdots \times \theta_1$. 
						  As $\chi\neq \phi$ are constituents of $\theta^W$,
              according to \cite[Corollary 6.17]{isaac} (Gallagher), it follows that there exists a nontrivial irreducible character 
						  $\beta$ of $W/B$
              such that $\phi=\beta \chi$, where $\beta$ is seen as a linear character of $W$ with $\Ker(\beta)=B$. From hypothesis,
						  there exists a subgroup $H\leqslant W$ and a linear character $\lambda$ of $H$ such that $\chi\in \Cons(\lambda^W)$
              and $\beta\chi \notin \Cons(\lambda^W)$. %Hence, we are done.
\end{enumerate}
\end{proof}

\begin{obs}\rm
It is natural to ask if, in Theorem \ref{main}, we can drop the technical hypothesis: "For any $\chi\in \Irr(W)$ with $\chi_B\in\Irr(B)$ and any non trivial character $\beta$ of $W/B$, 
there exists a subgroup $H\subset W$ and a linear character $\lambda$ of $H$ such that $\chi\in \Cons(\lambda^W)$
and $\beta\chi \notin \Cons(\lambda^W)$.". In other words, is it true, in general, that if $A$ is almost monomial and $C$ is a cyclic group of order prime $p>0$, then
$W=A\wr C$ is almost monomial? We couldn't find a counterexample using GAP \cite{gap}, but we believe that this assertion is false.
\end{obs}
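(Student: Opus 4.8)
The Remark does not assert a theorem but poses an open question: is the technical hypothesis of Theorem \ref{main} actually necessary, or does $A$ almost monomial together with $C$ cyclic of prime order $p$ already force $W=A\wr C$ to be almost monomial? The plan is to substantiate the authors' belief that the hypothesis cannot be dropped, by isolating the unique point in the proof of Theorem \ref{main} where it is invoked and then searching for a group in which the separation required there provably fails.

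First I would recall that the proof of Theorem \ref{main} separates every pair $\chi\neq\phi\in\Irr(W)$ \emph{unconditionally} except in case (ii.b): when $\chi_B=\phi_B=\theta=\varphi\times\cdots\times\varphi$ with $\varphi\in\Irr(A)$, so that $\theta$ is $W$-invariant and extends to $W$. By Clifford theory and Gallagher's theorem the irreducible constituents of $\theta^W$ are exactly the $p$ distinct twists $\chi,\beta\chi,\ldots,\beta^{p-1}\chi$, where $\beta$ runs over $\Irr(W/B)\cong\Irr(C)$, each occurring with multiplicity one. Thus, modulo the rest of the argument, dropping the hypothesis is equivalent to the claim that any two of these twists can always be separated by a monomial character of $W$.

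The crucial structural observation, which explains why the hypothesis is not cosmetic, is that \emph{no subgroup contained in $B$ can ever separate $\chi$ from a twist $\beta\chi$}. Indeed, if $H\leqslant B$ and $\lambda\in\Lin(H)$, then since $\beta$ is trivial on $B$ we have $(\beta\chi)_H=\beta_H\cdot\chi_H=\chi_H$, whence by Frobenius reciprocity $\langle\lambda^W,\chi\rangle=\langle\lambda,\chi_H\rangle=\langle\lambda,(\beta\chi)_H\rangle=\langle\lambda^W,\beta\chi\rangle$. Consequently any separating subgroup must satisfy $H\not\leqslant B$, and since $|W:B|=p$ is prime this forces $HB=W$ and $H/(H\cap B)\cong C$. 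The entire difficulty therefore lives among the subgroups of $W$ that project onto $C$, where $\lambda$ must encode the $C$-action rather than mere data on the base $B$; this is precisely the regime the almost-monomiality of $A$ alone does not obviously control.

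Having localized the obstruction, I would proceed on two fronts. Because Proposition \ref{boboo} makes $W$ monomial (hence almost monomial) whenever $A$ is monomial, any counterexample must use an $A$ that is almost monomial but not monomial, together with a \emph{non-monomial} $\varphi\in\Irr(A)$ driving the diagonal $\theta$; the smallest candidate is $A=\SL_2(\mathbb F_3)$ with $p=2$, giving $|W|=24^2\cdot 2=1152$. I would compute $\Irr(W)$ in GAP, enumerate the twist-pairs $(\chi,\beta\chi)$ arising from diagonal $W$-invariant characters, and test the separation relation restricted to the subgroups $H$ with $HB=W$; if every such pair turns out separable, one enlarges $A$ or takes $p=3$ and repeats. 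The main obstacle is twofold. On the negative side, exhibiting a genuine counterexample demands showing that \emph{no} linear character of \emph{any} subgroup crossing the $B$--$C$ boundary separates some fixed twist-pair, an exhaustive verification that the authors' experiments suggest already fails for the smallest groups. On the positive side, the same localization might instead be pushed into a proof that the extensions of a diagonal invariant $\theta$ are always separated by an explicit family of subgroups assembled from the almost-monomial separating data on $A$, which would make Theorem \ref{main} unconditional. Deciding which alternative holds is exactly the content of the open question, and I expect the delicate separation of extensions across the wreath boundary to be where the real work lies.
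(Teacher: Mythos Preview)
The paper offers no proof for this Remark: it is a bare statement of an open question together with the report that a GAP search produced no counterexample and the authors' belief that the hypothesis is genuinely needed. There is therefore nothing to compare your argument against.

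That said, your analysis is correct and in fact goes further than the paper does at this point. Your localization of the obstruction to case (ii.b) is accurate, and your observation that no subgroup $H\leqslant B$ can ever separate $\chi$ from $\beta\chi$ (because $\beta_H=1_H$ forces $\langle\lambda^W,\chi\rangle=\langle\lambda^W,\beta\chi\rangle$ via Frobenius reciprocity) is a clean structural fact that the paper only hints at in the \emph{subsequent} Remark, where it notes that one must look at subgroups of the form $HT$ with $H=H_1\times\cdots\times H_1$. Your identification of $\SL_2(\mathbb F_3)$ as the smallest viable base group for a counterexample is also on target, since Proposition~\ref{boboo} rules out monomial $A$. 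In short, what you have written is not a proof of the Remark---none is possible, since it asserts nothing---but a sound and somewhat sharper elaboration of the heuristics behind it.
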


\begin{obs}\rm
With the notations from Theorem \ref{main}, if $\chi\in \Irr(W)$ such
that $\theta=\chi_B\in \Irr(B)$, then $(\theta^G)_T=(\theta_{B\cap T})^T = \theta(1)\rho_T = \chi(1)\rho_T$, where $\rho_T=\nu_1+\cdots+\nu_p$ is the regular character of $T$,
and $\nu_1,\ldots,\nu_p$ are the irreducible (linear) characters of $T$. 
On the other hand, $\theta^G = \chi_1+\cdots+\chi_p$, where $\chi_j=\nu_j\chi$ for $1\leq j\leq p$, 
hence $(\chi_1)_T + \cdots + (\chi_p)_T = \chi(1)\rho_T$. However, this does not guarantee (and in fact is false!) that $|\Cons((\chi_j)_T)|=1$, so we cannot use the
subgroup $T\leqslant W$ in order to check the definition of almost monomial groups for $\chi_1, \ldots,\chi_p$. 

If we want to find a subgroup 
$H\subset W$ and a linear character $\lambda$ of $H$ such that let's say $\chi_1\in \Cons(\lambda^W)$ and $\chi_2 \notin \Cons(\lambda^W)$, 
we have to consider subgroups of the form $HT\subset W$, where $H=H_1\times \cdots \times H_1$ and 
$H_1$ is a subgroup of $A$, and linear characters $\lambda$ of $HT$.
\end{obs}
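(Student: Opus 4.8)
The plan is to treat the remark as three separate assertions—two exact identities and one impossibility claim, plus the closing structural comment—and justify each in turn; throughout, $G$ denotes $W$. First I would establish the restriction formula of the opening display. Since $W=B\rtimes T$ we have $B\cap T=\{1\}$, and since $B\unlhd W$ the size formula $|TgB|=|T||B|/|T\cap gBg^{-1}|=|T||B|$ shows that all of $W$ is a single $(T,B)$-double coset. Mackey's restriction theorem then collapses to a single term,
\[
(\theta^W)_T=(\theta_{B\cap T})^T=\bigl(\theta(1)\cdot 1_{\{1\}}\bigr)^T=\theta(1)\,\rho_T=\chi(1)\,\rho_T,
\]
using that the trivial character of $\{1\}$ induces the regular character $\rho_T$ and that $\theta(1)=\chi_B(1)=\chi(1)$.

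Next I would record the decomposition of $\theta^W$. In case (ii) of Theorem \ref{main} we have $\chi_B=\theta\in\Irr(B)$, so $\chi$ is an extension of $\theta$; Gallagher's theorem (\cite[Corollary 6.17]{isaac}) gives that the irreducible constituents of $\theta^W$ are exactly the $\beta\chi$ with $\beta\in\Irr(W/B)$, each occurring with multiplicity $\beta(1)$. As $W/B\cong C$ is abelian, every $\beta(1)=1$, and identifying $\Irr(W/B)$ with $\{\nu_1,\ldots,\nu_p\}$ yields
\[
\theta^W=\sum_{\beta\in\Irr(W/B)}\beta(1)\,\beta\chi=\sum_{j=1}^p\nu_j\chi=\sum_{j=1}^p\chi_j .
\]
Restricting to $T$ and comparing with the first display gives $\sum_{j=1}^p(\chi_j)_T=\chi(1)\rho_T$, the second stated identity.

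The heart of the remark is the impossibility claim, and here the key observation is that $(\chi_j)_T=(\nu_j)_T\cdot\chi_T=\nu_j\,\chi_T$, where $\nu_j$ is read as a character of $T$ via $T\cong W/B$. Writing $\chi_T=\sum_i m_i\nu_i$ and $S=\{i:m_i>0\}$, multiplication by the linear $\nu_j$ merely translates the support, so $\Cons((\chi_j)_T)$ is a translate $S+j$ of $S$; by Frobenius reciprocity a linear character $\mu$ of $T$ separates $\chi_1$ from $\chi_2$ if and only if $\mu$ lies in $(S+1)\setminus(S+2)$, and when $S=\mathbb{Z}/p\mathbb{Z}$ no such $\mu$ exists and every $(\chi_j)_T$ has $p$ constituents. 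To show this genuinely occurs I would exhibit a small example: for $A=\SS_3$, $p=2$, and $\varphi$ the two-dimensional irreducible, $\theta=\varphi\times\varphi$ extends to a degree-$4$ character $\chi$ of $W=\SS_3\wr C$ with $\chi_T=3\nu_1+\nu_2$, whence $(\chi_1)_T=3\nu_1+\nu_2$ and $(\chi_2)_T=\nu_1+3\nu_2$ share the support $\{\nu_1,\nu_2\}$, so $T$ cannot separate $\chi_1$ and $\chi_2$. The expected main obstacle is exactly this step: the falsity of $|\Cons((\chi_j)_T)|=1$ is not structural but must be witnessed by a concrete $A$ and $\chi$ (conveniently confirmed in GAP \cite{gap}).

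Finally, to justify the closing sentence I would explain why the replacement subgroups must have the stated shape. For $HT$ to be a subgroup of $W$ it is necessary and sufficient that $T$ normalize $H$; among product subgroups $H_1\times\cdots\times H_p$ of $B$ the generator of $T$ cyclically permutes the coordinates, so $T$-invariance forces $H_1=\cdots=H_p$, i.e.\ $H=H_1\times\cdots\times H_1$ with $H_1\leqslant A$. Likewise a linear character of $HT$ restricts to a $T$-invariant linear character of $H$, and for a product $\lambda_1\times\cdots\times\lambda_p$ this again forces $\lambda_1=\cdots=\lambda_p$; conversely such a $T$-invariant $\lambda$ does extend to $HT$ because $HT/H\cong T$ is cyclic. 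This pins down precisely the search space $\{(HT,\lambda)\}$ referred to in the remark, and the hypothesis of Theorem \ref{main} is exactly the assertion that within this space a pair separating $\chi_1$ from $\chi_2$ exists.
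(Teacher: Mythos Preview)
Your argument is correct. In the paper this is a Remark stated without separate proof---the identities $(\theta^W)_T=\theta(1)\rho_T$ and $\theta^W=\sum_j\nu_j\chi$ are asserted directly and the failure of $|\Cons((\chi_j)_T)|=1$ is claimed without an explicit witness---so you are simply supplying the natural justifications: Mackey's formula (single $(T,B)$-double coset, $B\cap T=\{1\}$) for the first identity, Gallagher's theorem for the second, and a concrete $\SS_3\wr C_2$ example (the swap on $V\otimes V$ has trace $2$, giving $\chi_T=3\nu_1+\nu_2$) for the impossibility claim. The only slight overreach is your closing sentence: the hypothesis of Theorem~\ref{main} allows an arbitrary subgroup $H\leqslant W$, not only those of the form $(H_1\times\cdots\times H_1)T$, so it is not literally ``exactly'' a statement about that restricted search space, though this matches the informal spirit of the Remark.
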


\begin{obs}\rm
We say that a group $A$ is called \emph{normally almost monomial} if  for any two irreducible characters $\chi\neq \phi$ of $G$,
there exist a normal subgroup $N\leqslant G$ and a linear character $\lambda$ of $N$, such that $\langle \chi, \lambda^G \rangle>0$ and
$\langle \phi, \lambda^G \rangle=0$; see \cite[Definition 5.1]{cim2}. Obviously, any normally almost monomial group is almost monomial, but the converse is not true;
see \cite[Remark 5.3]{cim2}.
We note that if $A$ is normally almost monomial, then $W=A\wr C$ may not be. 
For example, the dihedral group $A=D_{10}$  is normally almost monomial, but $W=D_{10}\wr \CC_2$ is not. 
However, $W$ is almost monomial.
\end{obs}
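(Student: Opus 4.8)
The statement to establish is the displayed example: $A=D_{10}$ is normally almost monomial, while $W=D_{10}\wr\CC_2$ is almost monomial but not normally almost monomial. (The two preliminary assertions, that normal almost monomiality implies almost monomiality and that the converse can fail, are immediate from the definitions and from \cite[Remark 5.3]{cim2}.) Throughout I would write $\Irr(D_{10})=\{1,\epsilon,\chi_1,\chi_2\}$, with $\epsilon$ the sign character and $\chi_1,\chi_2$ the two degree-$2$ irreducibles, and let $R=\langle r\rangle\cong\CC_5$ be the rotation subgroup, with $\psi_k\in\Lin(R)$ given by $\psi_k(r)=e^{2\pi ik/5}$. For $D_{10}$ itself I would record the normal-subgroup inductions $1^{D_{10}}=1$, $\epsilon^{D_{10}}=\epsilon$ (from $N=D_{10}$) and $\psi_1^{D_{10}}=\chi_1$, $\psi_2^{D_{10}}=\chi_2$ (from $N=R$, where a reflection interchanges $\psi_1,\psi_4$ and $\psi_2,\psi_3$). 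For each ordered pair of distinct irreducibles one of these contains the first and excludes the second (e.g. $\psi_1^{D_{10}}=\chi_1$ separates $\chi_1$ from each of $\chi_2,1,\epsilon$, and $1^{D_{10}}$ separates $1$ from $\epsilon$); this short finite check shows $D_{10}$ is normally almost monomial, hence in particular almost monomial.

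For the failure of normal almost monomiality, writing $B=D_{10}\times D_{10}$ and $\tau$ for the swap generating $T\cong\CC_2$, I take $\theta=\chi_1\times\chi_1\in\Irr(B)$; it is $T$-invariant, so by Gallagher \cite[Corollary 6.17]{isaac} it has exactly two extensions $\chi,\beta\chi\in\Irr(W)$ of degree $4$, where $\beta$ is the nontrivial character of $W/B$. The plan is to show that no normal $N\unlhd W$ separates $\chi$ from $\beta\chi$. Since $(\beta\chi)_N=\chi_N\,\beta_N$, whenever $N\subseteq B=\Ker\beta$ the restrictions coincide and $\langle\chi,\lambda^W\rangle=\langle\beta\chi,\lambda^W\rangle$ for every $\lambda\in\Lin(N)$. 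It then remains to treat the normal subgroups not inside $B$; using $W^{\mathrm{ab}}\cong\CC_2\times\CC_2$, these are exactly $W$ and the two index-$2$ subgroups $K_1,K_2$ (the kernels of the diagonal sign $d=\epsilon\times\epsilon$ and of $d$ times the swap sign). For $N=W$ every $\lambda^W$ is linear and cannot contain a degree-$4$ character, so both inner products vanish. For $N=K_i$ one has $K_i\cap B=B'$, the kernel of $d$ on $B$, of order $50$; the key fact is that $\theta_{B'}$ splits into two degree-$2$ irreducibles, because $\chi_1\cdot\epsilon=\chi_1$ in $D_{10}$ forces $\theta\otimes d\cong\theta$. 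Consequently $\chi_{K_i}$ has no linear constituent (one would restrict to a linear constituent of $\theta_{B'}$), so again both inner products vanish. Thus $\langle\chi,\lambda^W\rangle=\langle\beta\chi,\lambda^W\rangle$ for every normal $N$ and every $\lambda$, so this pair cannot be separated and $W$ is not normally almost monomial.

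To obtain almost monomiality I would invoke Theorem \ref{main} with $A=D_{10}$ (monomial, hence almost monomial), reducing to its technical hypothesis: for each $\chi\in\Irr(W)$ with $\chi_B=\theta_1\times\theta_1\in\Irr(B)$ and $\beta$ the nontrivial character of $W/B$, produce $H\leqslant W$ and $\lambda\in\Lin(H)$ with $\chi\in\Cons(\lambda^W)$ and $\beta\chi\notin\Cons(\lambda^W)$. For $\theta_1\in\{1,\epsilon\}$ the characters $\chi,\beta\chi$ are themselves linear characters of $W$, so $H=W$, $\lambda=\chi$ works at once. The substantive case is $\theta_1\in\{\chi_1,\chi_2\}$, i.e. exactly the pair shown to be invisible to normal subgroups; here I use the non-normal subgroup $H=(R\times R)\rtimes T$ of order $50$. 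From $(\chi_1)_R=\psi_1+\psi_4$ I get $(\chi_1\times\chi_1)_{R\times R}=\sum_{i,j\in\{1,4\}}\psi_i\times\psi_j$, and Clifford theory over $R\times R\unlhd H$ gives $\chi_H=\Lambda_a+\Lambda_b+\Xi$, where $\Lambda_a,\Lambda_b$ are the unique extensions of the $T$-fixed characters $\psi_1\times\psi_1$, $\psi_4\times\psi_4$ that occur, and $\Xi$ is the degree-$2$ character induced from the orbit $\{\psi_1\times\psi_4,\psi_4\times\psi_1\}$. Taking $\lambda=\Lambda_a$ yields $\langle\chi,\lambda^W\rangle=\langle\chi_H,\lambda\rangle=1$, while $(\beta\chi)_H=\chi_H\,\beta_H$ (with $\beta_H$ the sign of $H$) sends $\Lambda_a$ to its other extension, so $\langle\beta\chi,\lambda^W\rangle=0$. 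This verifies the hypothesis, and Theorem \ref{main} gives that $W$ is almost monomial.

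I expect the principal difficulty to be this last Clifford computation, namely showing that exactly one of the two extensions of $\psi_1\times\psi_1$ appears in $\chi_H$; this is precisely the information a normal subgroup cannot detect, which is what makes the non-normal $H$ indispensable and what reconciles the two parts of the example. The remaining inputs, the classification of the normal subgroups of $W$ and the splitting of $\theta_{B'}$ via $\chi_1\cdot\epsilon=\chi_1$, are routine finite verifications, and can be confirmed independently by GAP \cite{gap}.
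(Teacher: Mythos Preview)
Your argument is correct and far more detailed than what the paper offers: the paper states this remark as a bare observation with no proof, implicitly relying on direct computation (consistent with its use of GAP \cite{gap} elsewhere). You instead give a conceptual proof, verifying normal almost monomiality of $D_{10}$ by hand, showing that the two extensions of $\chi_1\times\chi_1$ cannot be separated by any normal subgroup, and then checking the hypothesis of Theorem~\ref{main} via the non-normal subgroup $(R\times R)\rtimes T$. What this buys over the paper's assertion is an explanation of \emph{why} the example works: normal subgroups fail because $\chi_1\cdot\epsilon=\chi_1$ forces $\theta$ restricted to $K_i\cap B$ to have no linear constituents, while the non-normal $H=(R\times R)\rtimes T$ succeeds because exactly one extension of $\psi_1\times\psi_1$ appears in $\chi_H$.

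One point deserves a little more care. Your claim that the normal subgroups of $W$ not contained in $B$ are exactly $W,K_1,K_2$ is correct, but invoking $W^{\mathrm{ab}}\cong\CC_2\times\CC_2$ only handles normal subgroups containing $W'$. To rule out smaller ones you should note that $W/(R\times R)\cong D_8$ has its unique normal subgroup of order $2$ (the center) inside $B/(R\times R)$, and that $Z(W)=1$; this excludes $N\cap B\in\{1,R\times R\}$. This is the ``routine finite verification'' you flag at the end, so the gap is expository rather than mathematical. A minor notational caution: using $B'$ for $\ker(d|_B)$ (order $50$) invites confusion with the commutator subgroup $[B,B]=R\times R$ (order $25$); a neutral symbol such as $B_0$ would be safer.
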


\addcontentsline{toc}{section}{References}
{}


\begin{thebibliography}{}
\bibitem{artin2} E.\ Artin, \emph{Zur Theorie der L-Reihen mit allgemeinen Gruppencharakteren}, Abh. Math. Sem. Hamburg \textbf{8} (1931), 292--306.
% \bibitem{punjab} G.\ K.\ Bakshi, S.\ Maheshwary, \emph{Extremely strong Shoda pairs with GAP}, J. Symbolic Comput. \textbf{76} (2016), 97--106. 
% \bibitem{berko} Ya.\ G.\ Berkovich, E.\ M.\ Zhmud, \emph{Characters of Finite Groups. Part 1}, Transl. Math. Monogr., 
% vol \textbf{181}, Amer. Math. Soc., Providence, RI, (1998).
%\bibitem{book} A.\ R.\ Booker, \emph{Artin's conjecture, Turing's method, and the Riemann hypothesis}, 
%   Experiment. Math. \textbf{15 , no. 4} (2006), 385--407.
\bibitem{brauer} R.\ Brauer, \emph{On Artin's L-series with general group characters}, Ann. of Math. \textbf{48} (1947), 502--514.
% \bibitem{cim} M.\ Cimpoea\c s, \emph{On the semigroup ring of holomorphic Artin L-functions},  Colloq. Math. \textbf{160, no. 2} (2020), 283--295.
\bibitem{cim2} M.\ Cimpoea\c s, \emph{On a generalization of monomial groups}, arxiv.org/pdf/1910.12683.pdf
\bibitem{forum} M.\ Cimpoea\c s, F.\ Nicolae, \emph{Independence of Artin L-functions},  Forum Math. \textbf{31, no.2} (2019), 529--534.
\bibitem{lucrare} M.\ Cimpoea\c s, F.\ Nicolae, \emph{Artin L-functions to almost monomial Galois groups},  Forum Math. \textbf{32 no. 4} (2020), 937--940.
% \bibitem{dade} E.\ C.\ Dade, Normal subgroups of M-groups need not be M-groups, Math. Zeit. \textbf{133} (1973), 313--317.
\bibitem{dorn} L.\ Dornhoff, \text{M-groups and 2-groups}, Math. Zeit. \textbf{100} (1967), 226--256.
% \bibitem{feit} W.\ Feit and G.\ M.\ Seitz, \emph{On finite rational groups and related topics}, Illinois J. Math, \textbf{33, No. 1}, (1988), 103--131.
\bibitem{gap}  The GAP Group, \emph{GAP Groups, Algorithms, and Programming}, Version 4.10.2 (2019), https://www.gap-system.org.
% \bibitem{guan} Guan Aun How, \emph{Some classes of monomial groups}, PhD thesis, Australian National University, (1980).
\bibitem{isaac} I.\ M.\ Isaacs, \emph{Character Theory of Finite Groups}, Dover, NY, (1994).
\bibitem{isaac2} I.\ M.\ Isaacs, \emph{Characters of Solvable Groups}, Graduate Studies in Mathematics Volume 189 (2018).
% \bibitem{james} G.\ James, A.\ Kerber, \emph{The Representation Theory of the Symmetric Group}, Addison-Wesley, (1991).
\bibitem{konig} J.\ K\"onig, \emph{Solvability of Generalized Monomial Groups}, J. Group Theory \textbf{13} (2010), 207--220.
% \bibitem{kwang} Kwang-Wu Chen, \emph{On relative M-groups}, Chinese J. Math. \textbf{vol 21, no. 1} (1993), 1--12. 
% \bibitem{hugh} J.\ McHugh, \emph{Monomial Characters of Finite Groups}, Graduate College Dissertations and Theses, (2016), Paper 572.
% \bibitem{numb} F.\ Nicolae, \emph{On the semigroup of Artin's L-functions holomorphic at $s_0$}, Journal of Number Theory \textbf{128}, (2008), 2861--2864.
\bibitem{monat} F.\ Nicolae, \emph{On holomorphic Artin L-functions}, Monatsh. Math. \textbf{186, no. 4}, (2018), 679--683.
% \bibitem{serre} J.\ P.\ Serre, \emph{Linear representations}, (1971)
% \bibitem{spring} T.\ A.\ Springer, \emph{A construction of representations of Weyl groups}, Invent. Math. \emph{44 , no. 3}, (1978).
\bibitem{taketa} K.\ Taketa, \emph{\"Uber die Gruppen, deren Darstellungen sich s\"amtlich auf monomiale Gestalt transformieren lassen}, Proc. Acad. Tokyo 6 (1930), 31--33.
% \bibitem{vila} R.\ H.\ Villareal, \emph{Monomial algebras}, Monographs and Textbooks in Pure and Applied Mathematics, vol. \textbf{238}, Marcel Dekker Inc., New York, (2001).
% \bibitem{waall} R.\ W.\ van der Waall, \emph{On the embedding of minimal non-M-groups}, Indag. Math. \textbf{77} (1974), 157--167.
\end{thebibliography}
\end{document}